\theoremstyle{plain}
\newtheorem{theorem}{Theorem}[section]
\newtheorem{definition-theorem}[theorem]{Definition-Theorem}
\newtheorem{corollary}[theorem]{Corollary}
\title{Bispectrality for Matrix Laguerre-Sobolev polynomials}
\author{Francisco Marcellán}
\address[F. Marcellán]{Departamento de Matemáticas, Universidad Carlos III de Madrid,
Leganés, Spain.}
\email{pacomarc@ing.uc3m.es}
\author{Ignacio Zurrián}
\address[I. Zurrián]{Departamento de Matemática Aplicada II, Universidad de Sevilla, Seville, Spain.}
\email[Corresponding author]{ignacio.zurrian@fulbrightmail.org}
\begin{document}
\maketitle

\textbf{Abstract}\\

In this contribution we deal with sequences of polynomials orthogonal with respect to a Sobolev type inner product. A banded symmetric operator is associated with such a sequence of polynomials according to the higher order difference equation they satisfy. Taking into account the Darboux transformation of the corresponding matrix we deduce the connection with a sequence of orthogonal polynomials associated with a Christoffel perturbation of the measure involved in the standard part of the Sobolev inner product. A connection with matrix orthogonal polynomials is stated. The Laguerre-Sobolev type case is studied as an illustrative example. Finally, the bispectrality of such matrix orthogonal polynomials is pointed out.\\

\bigskip

\textbf{Mathematics  Subject Classification}: Primary: 42C05, 33C45. Secondary: 15 A23, 34L10.\\

\textbf{Keywords}: Standard orthogonal polynomials, Sobolev type orthogonal polynomials, Darboux transformations, matrix orthogonal polynomials, bispectrality.

\section{Introduction}

The study of inner products associated with a vector of measures $(d\mu_{0}, d\mu_{1}, \cdots, d\mu_{N})$ supported on the real line has attracted the interest of many researchers taking into account many properties of standard orthogonal polynomials are lost (see the survey paper \cite{MX15}).  In particular, the multiplication operator by $x$ is not symmetric with respect to such inner products and, as a consequence, the corresponding sequences of orthogonal polynomials do not satisfy a three term recurrence relation, that plays a central role in the theory of standard orthogonal polynomials (see \cite{Ch78}).  The matrix counterpart of the three term recurrence relation is a tridiagonal matrix that is known in the literature as Jacobi matrix. The spectral theory of such Jacobi matrices is an old topic and yields the so called Favard theorem (see \cite{Ch78}). Assuming you have $LU$ and $UL$ factorization, respectively, of a shifted Jacobi matrix, then the commutation between the matrices in the above factorizations yields new Jacobi matrices whose spectral resolution generates the canonical  Christoffel and Geronimus transformations, respectively (see \cite{BM04}, \cite{GMM21}, \cite{Ga02}, \cite{Ga04}  \cite{Y02}, \cite{Z97}, among others). They are the discrete counterpart of the Darboux transformations for second order linear differential operators. When you consider a canonical  Christoffel transformation and next a canonical Geronimus transformation of a Jacobi matrix, then the resulting Jacobi matrix has as spectral resolution the so called Uvarov transformation that is a perturbation of the initial spectral measure by adding a Dirac mass point. They appear in the framework of the spectral analysis of fourth order differential operators with polynomial coefficients as analyzed in the pioneering work \cite{HLK40}.\\

The implementation of multiple Christoffel transformations, i. e., an iteration of canonical Christoffel transformations,  has been studied in \cite{Ga04}. On the other hand,  in \cite{DGM14} the authors focus the attention on multiple Geronimus transformations in a more general framework.\\

When you deal with a Sobolev inner product associated with a vector of measures as above but $d\mu_{k}, k=1, 2, \cdots, N,$ are supported on finite subsets of the real line, the so called Sobolev-type inner product appears. The corresponding sequences of orthogonal polynomials are "no so far" of the sequences of standard orthogonal polynomials with respect to the measure $d\mu_{0}.$ This fact was pointed out in \cite{AMRR92} when  $N=1$  and the support of $d\mu_{1}$ is  a point $c$ in the real line that can also be a mass point of the measure $d\mu_{0}$ and in \cite{MR90} when $d\mu_{k}=0, k=1, 2, \cdots, N-1,$ and support of $d\mu_{N}$ is a point $c$ in the real line. Algebraic and analytic properties of such orthogonal polynomials have been extensively studied in the literature. In particular, when $d\mu_{0}$ is the gamma distribution  several authors have studied differential operators such that the corresponding eigenfunctions are orthogonal polynomials with respect to Sobolev type inner products assuming the support of the measures $d\mu_{k}, k=1, 2,\cdots, N$ is  $\{0\}.$ The pioneering work \cite{K90}  yields an intensive study about the existence and explicit expressions for such differential operators (see \cite{DI15}, \cite{KKB98}, \cite{KM93}, \cite{M19}). When $d\mu_{0}$ is the beta distribution, a similar analysis was done when the masses are located in one of the end points of the support, i. e., $\{\pm1\}$ (see \cite{DI18}, \cite{M21}, \cite{Ma21}, \cite{M22}).\\

Orthogonal polynomials with respect to Sobolev type  inner products satisfy higher order recurrence relations associated with a multiplication operator by a polynomial. Such an operator is symmetric with respect to the above inner product. The  converse result, an analogue of the Favard's theorem, has been studied in \cite{D93} where a representation of a general symmetric real bilinear form such that there exists a multiplication operator by a polynomial $x^{N+1}$ that is symmetric with respect to such a bilinear form $B$, i. e., the corresponding sequence of orthogonal polynomials satisfies a symmetric $2N+3$ recurrence relation, is given. Moreover, the following facts are equivalent (see Corollary 7 in \cite{D93}).

\begin{itemize}

\item  The multiplication operator by $x^{N+1}$ is a symmetric operator with respect to the bilinear form $B$, it commutes with the multiplication operator $x,$ i. e., if $p, q$ are polynomials,  then $B(x^{N+1} p(x), x q(x))= B (x p(x), x^{N+1}q(x)),$ and $B(x^{j}, x^{^k})= B(1, x^{j+k}), 1\leq j, k\leq N.$

\item  There exist a function $\mu_{0}$ and constants $M_{k}, 1\leq k\leq N,$ such that

$$B(p(x), q(x))= \int p(x) q(x) d\mu_{0}(x) + \sum_{k=1}^{N} M_{k} p^{(k)}(0) q^{(k)}(0).$$

\end{itemize}

In particular, it was shown in \cite{ELMMR95} that for a  Sobolev type  inner product
$$\langle f,g\rangle = \int f(x)\,g(x)\,d\mu(x)+\sum_{k=0}^N M_k f^{(k)}(c)g^{(k)}(c), \quad M_N>0,
$$
where $c$ is a point in $\mathbb R$, the multiplication by $(x-c)^{N+1},$ denoted by $E$,  is a symmetric operator and the sequence of orthogonal polynomials $\{s_n\}_{n\geq0}$ satisfies a $(2N+3)$-term recurrence relation of the form
$$(x-c)^{N+1}s_n(x)=\sum_{k=n-N-1}^{n+N+1} a_{n,k}\,s_k(x).$$
In other words, $s_n(x)$ is an eigenfunction of a linear difference operator $E$ in the variable $n$ with eigenvalue $(x-c)^{N+1}$. Notice that, according to \cite{ELMMR95}, if you have a Sobolev inner product
 $$
 \langle f, g\rangle =\sum_{k=0}^N  \int f^{(k)}(x)\,g^{(k)}(x)\,d\mu_{k}(x)
$$
and the multiplication by $(x-c)^{N+1}$ is a symmetric operator with respect to the above inner product, then $d \mu_{k}(x), k=1, 2, \cdots, N, $ are Dirac deltas supported at $x=c$ and the mass of $d \mu_{N}(x)$ is a positive real number. \\

On the other hand, in Theorem 6 \cite{D93} it  is proved  that the following  statements are equivalent.

\begin{itemize}

\item  The multiplication operator by $x^{N+1}$ is symmetric with respect to the bilinear form $B$ it commutes with the multiplication operator $x,$ i. e., $B(x^{N+1} p(x), x q(x))= B (x p(x), x^{N+1}q(x)),$ where $p, q$ are polynomials.

\item  There exist a function $\mu_{0}$ and a positive semi-definite matrix $M$ such that

$$B(p(x), q(x))= \int p(x) q(x) d\mu_{0}(x) + (p(0), p'(0), \cdots, p^{(N)}(0)) M  (q(0), q'(0), \cdots, q^{(N)}(0))^{t}.$$

\end{itemize}

This inner product is said to be a nondiagonal Sobolev type inner product. Zeros and asymptotic properties of sequences of orthogonal polynomials with respect to the above inner product have been studied in \cite{AMRR95}. A connection with bispectral problems when $d\mu_{0}(x)$ is the gamma distribution has been studied in \cite{DI20}.\\

The structure of the manuscript is as follows.  In Section \ref{Sec-DT}  we prove that a Darboux transformation of the operator $E$. i.e., $E=LU,$ gives rise to an operator $UL$ which has as eigenfunctions the orthogonal polynomials associated with $(x-c)^{N+1}d\mu(x)$. Furthermore, we prove that $UL$ actually is the $(N+1)$-th power of the {\it standard} three-term recurrence relation (TTRR in short) that the sequence of  polynomials orthogonal with respect to the measure $(x-c)^{N+1}d\mu(x)$ satisfies. Thus, we generalize a result given in \cite{HHLM22} when $N=1$ concerning the connection between the matrix representation, a five diagonal matrix in terms of the orthonormal basis $s_n(x)$,  of the multiplication operator by $(x-c)^2$ and the square of the shifted matrix $J_{2} -cI,$ where $J_{2}$ is the Jacobi matrix associated with the measure $(x-c)^2 d\mu(x).$\\

In Section \ref{dos} we set a matrix-valued approach by means of \cite{DvA95}. For this regard we consider the specific sequence of Laguerre-Sobolev type orthogonal polynomials to build a  monic matrix-valued orthogonal polynomial sequence  $\{P_n\}_{n\geq0}$ that satisfy a TTRR with matrix coefficients and we perform a Darboux transformation to find a very interesting connection with results in \cite{DS02}. Namely, we start with a matrix-valued TTRR
$$xP_n (x)=P_{n+1}(x)+(\zeta_{2n+1}+\zeta_{2n})P_n (x)+\zeta_{2n}\zeta_{2n-1}P_{n-1}(x), n\geq0, P_{-1}(x)=0, $$
which, after a Darboux transformation, yields a TTRR satisfied by another sequence of monic matrix orthogonal  polynomials $\{Q_n\}_{n\geq0}$
$$
xQ_n (x)=Q_{n+1}(x)+(\zeta_{2n+2 }+\zeta_{2n+1})Q_n(x)+\zeta_{2n+1}\zeta_{2n }Q_{n-1}(x), n\geq0, Q_{-1}(x)=0.
$$
Here, the coefficients $\zeta_n$ are such that $$xW_n (x)=W_{n+1}(x)+\zeta_n W_{n-1}(x), n\geq0, W_{-1}(x)=0,$$
where $\{W_n\}_{n\geq0}$ is a sequence of  monic matrix orthogonal polynomials given by $W_{2n}(x)=P_n(x^2),  n\geq0, $ and $W_{2n+1}(x)=x Q_n(x^2), n\geq0$.\\

Finally, in Section \ref{tres}, we consider the Laguerre-Sobolev type inner product with $\alpha\in\mathbb N$, $N=1$ and $M_1>0, M_0=0$, to construct a differential operator of order $8$ that has every $P_n$ as eigenfunction, showing an underlaying matrix-valued bispectrality. Lastly, we prove that any matrix-valued orthogonal polynomial built from bispectral scalar polynomials with the aid of \cite{DvA95} is bispectral too.  Furthermore, we give a general an explicit method to build the corresponding differential operator.

\section{Sobolev polynomials under Darboux transformation}\label{Sec-DT}

Given a probability measure $\mu$ supported on an infinite subset of the real line, a point $c$ in the real line and a positive integer $N$, we consider the following  inner products: the one mentioned in the introduction
\begin{equation}\label{inner-product}
\langle f,g\rangle = \int f(x)\,g(x)\,d\mu(x)+\sum_{j,k=0}^N M_{j,k} f^{(j)}(c)g^{(k)}(c),
\end{equation}
where $(M_{j,k})_{j,k=0}^{N}$ is a positive semi-definite matrix of size  $(N+1)\times(N+1)$, and
other one of the form
\begin{equation}\label{inner-product-2}
\langle f,g\rangle_{N+1} = \int f(x)\,g(x)\,(x-c)^{N+1}d\mu(x).
\end{equation}

Now, let us denote by  $\{s_n\}_{n\geq0}$ and $\{p_n\}_{n\geq0}$ the sequences of orthonormal polynomials with respect to \eqref{inner-product} and \eqref{inner-product-2}, respectively. Immediately, one realizes that, since \begin{align}
\langle s_n,p_j\rangle_{N+1}=\langle s_n,(x-c)^{N+1} p_j\rangle_{0}=\langle (x-c)^{N+1}s_n, p_j\rangle
\end{align}
is equal to $0$ for $j<n-N-1$, we have
$$s_n (x)=\sum_{j=n-N-1}^{n} T_{n,j}p_j (x),$$
for some coefficients $T_{n,j}$.

For any two sequences of polynomials $\{\alpha_j\}_{j\geq0}$ and $\{\beta_j\}_{j\geq0}$ one can consider the vector notation $\alpha=(\alpha_0,\alpha_1,\dots)^t$ and $\beta=(\beta_0,\beta_1,\dots)^t$. Furthermore, for any inner product $B(\cdot,\cdot)$ we can also consider the bilinear form $B( \alpha,\beta)$ which is nothing more than the semi-infinite matrix whose $(j,k)$-entry is given by $B( \alpha_j,\beta_k )$.
With this notation, if we call $s=(s_0,s_1,\dots)^t$, $p=(p_0,p_1,\dots)^t$,
 and we
define the semi-infinite nonsingular matrix $T= (T_{n,j})_{n,j=0}^{\infty}$,  then we have $s=Tp$ and therefore
$$
\langle s,s \rangle_{N+1}=\langle Tp,Tp \rangle_{N+1}=TT^*.
$$
Recall in this connection that, by definition, the matrix $T$ is not only lower triangular and nonsingular but also has zero entries below the $(N+1)$-th subdiagonal.

On the other hand, we have that the sequence of orthonormal polynomials $\{s_n\}_{n\geq0}$ satisfies a $(2N+3)$-term recurrence relation of the form
$$(x-c)^{N+1}s_n(x)=\sum_{k=n-N-1}^{n+N+1} h_{n,k}\,s_k(x).$$
This defines a matrix $H$ such that
\begin{equation}\label{H}(x-c)^{N+1}s=Hs.\end{equation}
Since
$$\langle s,s \rangle_{N+1}=\langle Hs,s \rangle=H,$$
we also have the following factorization of $H$ $$H=TT^*.$$
From \eqref{H} we now have that
\begin{equation}\label{DT}
(x-c)^{N+1}s=TT^*s\quad\text{ and } \quad (x-c)^{N+1}p=T^*Tp.
\end{equation}
This can be summarized as follows.

\begin{theorem}\label{a}
For any probability measure $\mu$ supported on an infinite subset of the real line, a point $c$ in the real line and a positive integer $N$, the  sequence of Sobolev-type orthonormal polynomials $\{s_n\}_{n\geq0}$ with respect to
$$
\langle f,g\rangle = \int f(x)\,g(x)\,d\mu(x)+\sum_{j,k=0}^N M_{j,k} f^{(j)}(c)g^{(k)}(c),
$$
is a  Darboux transformation of the sequence of orthonormal polynomials $\{p_n\}_{n\geq0}$ with respect to $$
\langle f,g\rangle_{N+1} = \int f(x)\,g(x)\,(x-c)^{N+1}d\mu(x),
$$
 by means of \eqref{DT}. Namely, if we consider the TTRR  satisfied by the sequence of orthonormal polynomials $\{p_n(x)\}_{n\geq0},$ in vector notation $xp=J_{N+1}p$, the  symmetric matrix $(J_{N+1}-c)^{N+1}$ can be factorized  as $T^*T,$ where $s=Tp.$ Notice that the matrix $T= (T_{n,j})_{n,j=0}^{\infty}$  can be calculated explicitly
$$T_{n,j}  =
\left\{
\begin{aligned}
      & \left \langle s_n,p_j\right\rangle_{N+1}&   &n-N-1\le j\le n, \\ 
      &  0&  \quad &\text{elsewhere}.
\end{aligned}
\right.
$$
\end{theorem}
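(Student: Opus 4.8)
The plan is to organise the identities already derived in the paragraphs preceding the statement and to supply the two missing pieces: the explicit entries of $T$ and the identification of the Darboux-transformed operator with a power of the Jacobi matrix $J_{N+1}$. The one elementary fact used throughout is that, since $\big((x-c)^{N+1}f\big)^{(k)}(c)=0$ for $0\le k\le N$, the Sobolev inner product \eqref{inner-product} collapses on the range of multiplication by $(x-c)^{N+1}$: for all polynomials $f,g$ one has $\langle (x-c)^{N+1}f,g\rangle=\int (x-c)^{N+1}fg\,d\mu=\langle f,g\rangle_{N+1}$, so in particular multiplication by $(x-c)^{N+1}$ is symmetric for \eqref{inner-product}.

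First I would pin down $T$. As $\{s_n\}_{n\ge0}$ and $\{p_n\}_{n\ge0}$ are both graded bases of $\mathbb R[x]$ with $\deg s_n=\deg p_n=n$, there is a unique nonsingular lower-triangular $T$ with $s=Tp$; orthonormality of $\{p_j\}$ for $\langle\cdot,\cdot\rangle_{N+1}$ then gives $T_{n,j}=\langle s_n,p_j\rangle_{N+1}$, which vanishes for $j>n$ by degree. For $j<n-N-1$ I would use the collapse together with symmetry to write $T_{n,j}=\langle (x-c)^{N+1}s_n,p_j\rangle=\langle s_n,(x-c)^{N+1}p_j\rangle$, and this is zero because $(x-c)^{N+1}p_j$ has degree $j+N+1<n$ while $s_n$ is orthogonal in \eqref{inner-product} to every polynomial of degree $<n$. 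This is exactly the asserted formula and the banded lower-triangular shape of $T$.

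Next I would establish the factorization and transport it. On one hand $\langle s,s\rangle_{N+1}=\langle Tp,Tp\rangle_{N+1}=TT^*$ by orthonormality of $p$; on the other hand, by the collapse, $\langle s,s\rangle_{N+1}=\langle (x-c)^{N+1}s,s\rangle=\langle Hs,s\rangle=H$ by orthonormality of $s$. Hence $H=TT^*$ and \eqref{H} reads $(x-c)^{N+1}s=TT^*s$. Multiplying on the left by $T^{-1}$ — legitimate because scalar multiplication by $(x-c)^{N+1}$ commutes with the action of any constant semi-infinite matrix on a vector of polynomials — and using $s=Tp$, I obtain $(x-c)^{N+1}p=T^*Tp$, which is the second identity of \eqref{DT}. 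In the Darboux dictionary this is the passage $E=LU\mapsto UL$ with $E=H$, $L=T$, $U=T^*$.

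Finally I would recognise $T^*T$. Since $xp=J_{N+1}p$ with $J_{N+1}$ the tridiagonal symmetric Jacobi matrix of $\{p_n\}$, one has $q(x)p=q(J_{N+1})p$ for every polynomial $q$; with $q(x)=(x-c)^{N+1}$ this gives $(x-c)^{N+1}p=(J_{N+1}-cI)^{N+1}p$, so $\big((J_{N+1}-cI)^{N+1}-T^*T\big)p=0$, and linear independence of the $p_n$ forces $(J_{N+1}-cI)^{N+1}=T^*T$, as claimed. I do not expect a genuine obstacle: the only point needing care is the bookkeeping between the two inner products — i.e. the collapse identity — since it is at once what makes $(x-c)^{N+1}$ symmetric for \eqref{inner-product}, what forces the banded shape of $T$, and what yields the two factorizations $H=TT^*=\langle s,s\rangle_{N+1}$ on which the whole argument rests; conceptually, all the content is the translation ``Sobolev orthogonality for $\mu$ with a mass point of order $N$ at $c$'' $\leftrightarrow$ ``$(N+1)$-th power of the Jacobi matrix of the Christoffel perturbation $(x-c)^{N+1}d\mu$''.
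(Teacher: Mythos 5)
Your proposal is correct and follows essentially the same route as the paper: the collapse identity $\langle (x-c)^{N+1}f,g\rangle=\langle f,g\rangle_{N+1}$ giving the banded lower-triangular $T$ with $T_{n,j}=\langle s_n,p_j\rangle_{N+1}$, the factorization $H=\langle s,s\rangle_{N+1}=TT^*$, the Darboux exchange yielding $(x-c)^{N+1}p=T^*Tp$, and the identification with $(J_{N+1}-cI)^{N+1}$ via the TTRR. The only difference is that you spell out a few points the paper leaves implicit (invertibility of $T$ and the linear-independence step), which is harmless.
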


\

As a  straightforward consequence of the above theorem, when in \eqref{inner-product} $M_{j,k}=0, j, k= 0,1, \cdots, N,$ we get
\begin{corollary}
For any probability measure $\mu$ supported on an infinite subset of the real line, a point $c$ in the real line and a positive integer $N$, the sequence of orthonormal polynomials $\{q_n\}_{n\geq0}$ with respect to
$$
\langle f,g\rangle_{0} = \int f(x)\,g(x)\,d\mu(x)
$$
is  a  Darboux transformation of the sequence of orthonormal polynomials $\{p_n\}_{n\geq0}$ with respect to
$$
\langle f,g\rangle_{N+1} = \int f(x)\,g(x)\,(x-c)^{N+1}d\mu(x).$$
Namely, if we consider the TTRR satisfied by $\{p_n\}_{n\geq0}$ in vector notation $x p=J_{N+1} p$, the  symmetric matrix  $(J_{N+1}-c)^{N+1}$ can be factorized as $C^* C $ with $p=Cq.$ Furthermore,
$$C_{n,k}=
\left\{
\begin{aligned}
 & \left\langle q_n,p_k\right\rangle_{{N+1}}&   &n-N-1\le k\le n, \\ 
 & 0&  \quad &\text{elsewhere}.
\end{aligned}
\right.
$$
\end{corollary}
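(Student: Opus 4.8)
The corollary is Theorem~\ref{a} read in the degenerate case where every mass $M_{j,k}$ is zero, so the plan is to make that reduction precise. First I would observe that setting $M_{j,k}=0$ for $0\le j,k\le N$ turns the inner product \eqref{inner-product} into $\langle f,g\rangle=\int f(x)g(x)\,d\mu(x)=\langle f,g\rangle_{0}$. Since $\mu$ is a probability measure supported on an infinite set, the orthonormal polynomials for $\langle\cdot,\cdot\rangle_{0}$ exist and are uniquely determined once one fixes positive leading coefficients; hence the Sobolev-type sequence $\{s_n\}_{n\ge0}$ of Theorem~\ref{a} is exactly the sequence $\{q_n\}_{n\ge0}$, and the matrix $T$ of that theorem becomes the matrix $C$ of the statement. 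With these identifications Theorem~\ref{a} delivers everything at once: the change of basis $q=Cp$ (mirroring the relation $s=Tp$ there), the factorization $(J_{N+1}-c)^{N+1}=C^{*}C$ of the Christoffel-perturbed Jacobi matrix, and the fact that $C$ is lower triangular, nonsingular, with zero entries below its $(N+1)$-th subdiagonal.

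It then remains only to confirm the explicit formula for the entries of $C$, which Theorem~\ref{a} already gives as $C_{n,k}=\langle q_n,p_k\rangle_{N+1}$ inside the band $n-N-1\le k\le n$; the point to check is that this single formula by itself produces the claimed band, and this follows from two elementary degree counts applied to
\[
C_{n,k}=\langle q_n,p_k\rangle_{N+1}=\int q_n(x)\,p_k(x)\,(x-c)^{N+1}\,d\mu(x).
\]
If $k>n$ the integral vanishes because $p_k$ is orthogonal with respect to $\langle\cdot,\cdot\rangle_{N+1}$ to every polynomial of degree $<k$, in particular to $q_n$; if $k<n-N-1$ it vanishes because $q_n$ is orthogonal with respect to $\langle\cdot,\cdot\rangle_{0}$ to every polynomial of degree $<n$, in particular to the polynomial $p_k(x)(x-c)^{N+1}$ of degree $k+N+1<n$. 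Finally $C_{n,n}=\langle q_n,p_n\rangle_{N+1}$ equals the ratio of the leading coefficient of $q_n$ to that of $p_n$, hence is nonzero, re-proving that $C$ is nonsingular.

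I do not foresee a real obstacle here: the substance — the Darboux swap, the factorization, and the intertwining $q=Cp$ — is entirely inherited from Theorem~\ref{a}, and the rest is bookkeeping. The only step that genuinely uses a hypothesis is the identification $s_n=q_n$, which rests on $\mu$ being supported on an infinite set so that the orthonormal sequence is well defined and unique. One could alternatively bypass Theorem~\ref{a} and rerun the short computation from the opening of Section~\ref{Sec-DT} with $M_{j,k}=0$; the bookkeeping is the same, and in particular it makes transparent why the band of $C$ is one-sided of width $N+1$ while the band of $H=CC^{*}$ in \eqref{H} is two-sided of width $N+1$.
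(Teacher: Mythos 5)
Your proposal is correct and follows the paper's own route: the paper obtains the corollary precisely as the specialization $M_{j,k}=0$, $0\le j,k\le N$, of Theorem~\ref{a}, which is exactly your reduction, and your degree-count verification of the band and of the nonvanishing diagonal entries is consistent extra bookkeeping. Note only that you (rightly) read the intertwining as $q=Cp$, mirroring $s=Tp$; the statement's ``$p=Cq$'' is evidently a misprint, since the given formula $C_{n,k}=\left\langle q_n,p_k\right\rangle_{N+1}$ is the expansion of $q_n$ in the $\langle\cdot,\cdot\rangle_{N+1}$-orthonormal basis $\{p_k\}_{k\geq0}$.
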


\bigskip

Finally, let us observe that from the TTRR satisfied by $\{p_n\}_{n\geq0}$ in vector notation $xp=J_{N+1}p$ we have
$$
(x-c)^{N+1}p=(J_{N+1}-c)^{N+1} p.
$$
From what we saw above, $(J_{N+1}-c)^{N+1}$ admits an $UL$-factorization with $U=L^*$, which of course is not unique .
We conjecture that all such factorizations give rise to one of the families already considered above.

%

\section{Matrix-valued orthogonal polynomials}\label{dos}

In this section we will restrict ourselves to the particular case when $c=0$. Furthermore, for reasons of space we will simplify the notation by considering $\alpha=0$, $N=1$ and the inner product
\begin{equation}\label{inner}
\langle f,g\rangle=\int_0^\infty f(x)g(x)e^{-x}dx+f'(0)g'(0).
\end{equation}
The interested reader can verify that the results in the present section hold for more general $\alpha$ and $N.$ Nevertheless we believe that a $2\times2$ matrix-valued construction with $\alpha=0$ will suffice to illustrate the situation.

Let us denote   by $\{\mathcal L_n\}_{n\geq0}$  the sequence of orthonormal polynomials with respect to the inner product \eqref{inner}. Thus
we have
$$x^2 \mathcal L_n (x)=a_n\mathcal L_{n+2}(x)+b_n\mathcal L_{n+1}(x)+c_n\mathcal L_{n}(x)+b_{n-1}\mathcal L_{n-1}(x)+a_{n-2}\mathcal L_{n-2}(x),\quad n\geq 2,$$
with

\begin{align*}
a_n&=\sqrt{\frac{{\left(2 \, n^{2} + 7 \, n + 9\right)} {\left(2 \, n^{2} - 5 \, n + 6\right)} {\left(n + 4\right)} {\left(n + 2\right)} {\left(n + 1\right)}^{3}}{{\left(2 \, n^{2} + 3 \, n + 4\right)} {\left(2 \, n^{2} - n + 3\right)} {\left(n + 3\right)}}},
\\
b_n&=4 \, \sqrt{\frac{{\left(4 \, n^{7} + 16 \, n^{6} + 13 \, n^{5} + 10 \, n^{4} + 43 \, n^{3} + 64 \, n^{2} + 84 \, n + 36\right)}^{2} {\left(n + 1\right)}}{{\left(2 \, n^{2} + 3 \, n + 4\right)} {\left(2 \, n^{2} - n + 3\right)}^{2} {\left(2 \, n^{2} - 5 \, n + 6\right)} {\left(n + 3\right)} {\left(n + 2\right)}^{2}}},
\\
c_n&=
2 \, \sqrt{\frac{{\left(12 \, n^{8} + 12 \, n^{7} - 23 \, n^{6} + 57 \, n^{5} + 82 \, n^{4} - 81 \, n^{3} + 37 \, n^{2} + 120 \, n + 36\right)}^{2}}{{\left(2 \, n^{2} - n + 3\right)}^{2} {\left(2 \, n^{2} - 5 \, n + 6\right)}^{2} {\left(n + 2\right)}^{2} {\left(n + 1\right)}^{2}}}.
\end{align*}

Let $\{R_{0,n}\}_{n\geq0},\{R_{1,n}\}_{n\geq0}$  be the sequences of polynomials such that for any $n$  $$\mathcal L_n(x)= x R_{1,n}(x^2)+R_{0,n}(x^2).$$
Then, following \cite{DvA95}, we build the matrix-valued polynomials
\begin{equation}\label{R_n}
R_n(y)=
\left(\begin{matrix}
R_{0,2n}(y)&R_{1,2n}(y)
\\
R_{0,2n+1}(y)&R_{1,2n+1}(y)
\end{matrix}\right).
\end{equation}

The sequence $\{R_n\}_{n\geq0}$ satisfies a matrix TTRR
\begin{equation}\label{n3t}
xR_n(y)=A_{n-1}^*R_{n-1}(y)+B_nR_{n}(y)+ A_nR_{n+1}(y), n\geq0,
\end{equation}
with $A_n, B_n$ given, respectively, by

{\tiny
\begin{align*}
{A_n}_{0,0}=&\frac{2 \, \sqrt{8 \, n^{2} + 14 \, n + 9} \sqrt{4 \, n^{2} - 5 \, n + 3} {\left(2 \, n + 1\right)}^{\frac{3}{2}} \sqrt{n + 2} \sqrt{n + 1}}{\sqrt{8 \, n^{2} - 2 \, n + 3} \sqrt{4 \, n^{2} + 3 \, n + 2} \sqrt{2 \, n + 3}},
\\
{A_n}_{0,1}=&0,
\\
{A_n}_{1,0}=&
-\frac{4 \, {\left(256 \, n^{7} + 1408 \, n^{6} + 3088 \, n^{5} + 3640 \, n^{4} + 2692 \, n^{3} + 1414 \, n^{2} + 570 \, n + 135\right)} \sqrt{n + 1}}{\sqrt{8 \, n^{2} + 14 \, n + 9} \sqrt{8 \, n^{2} - 2 \, n + 3} {\left(4 \, n^{2} + 3 \, n + 2\right)} {\left(2 \, n + 3\right)} \sqrt{n + 2}}
,
\\
{A_n}_{1,1}=&\frac{2 \, \sqrt{8 \, n^{2} - 2 \, n + 3} \sqrt{4 \, n^{2} + 11 \, n + 9} \sqrt{2 \, n + 5} \sqrt{2 \, n + 3} {\left(n + 1\right)}^{\frac{3}{2}}}{\sqrt{8 \, n^{2} + 14 \, n + 9} \sqrt{4 \, n^{2} + 3 \, n + 2} \sqrt{n + 2}}
,
\\
{B_n}_{0,0}=&
\frac{2 \, {\left(768 \, n^{8} + 384 \, n^{7} - 368 \, n^{6} + 456 \, n^{5} + 328 \, n^{4} - 162 \, n^{3} + 37 \, n^{2} + 60 \, n + 9\right)}}{{\left(8 \, n^{2} - 2 \, n + 3\right)} {\left(4 \, n^{2} - 5 \, n + 3\right)} {\left(2 \, n + 1\right)} {\left(n + 1\right)}}
,
\\
{B_n}_{0,1}=&-\frac{4 \, {\left(128 \, n^{7} + 256 \, n^{6} + 104 \, n^{5} + 40 \, n^{4} + 86 \, n^{3} + 64 \, n^{2} + 42 \, n + 9\right)} \sqrt{2 \, n + 1}}{{\left(8 \, n^{2} - 2 \, n + 3\right)} \sqrt{4 \, n^{2} + 3 \, n + 2} \sqrt{4 \, n^{2} - 5 \, n + 3} \sqrt{2 \, n + 3} {\left(n + 1\right)}}
,
\\
{B_n}_{1,0}=&
-\frac{4 \, {\left(128 \, n^{7} + 256 \, n^{6} + 104 \, n^{5} + 40 \, n^{4} + 86 \, n^{3} + 64 \, n^{2} + 42 \, n + 9\right)} \sqrt{2 \, n + 1}}{{\left(8 \, n^{2} - 2 \, n + 3\right)} \sqrt{4 \, n^{2} + 3 \, n + 2} \sqrt{4 \, n^{2} - 5 \, n + 3} \sqrt{2 \, n + 3} {\left(n + 1\right)}}
,
\\
{B_n}_{1,1}=&\frac{2 \, {\left(768 \, n^{8} + 3456 \, n^{7} + 6352 \, n^{6} + 6744 \, n^{5} + 5128 \, n^{4} + 2898 \, n^{3} + 1099 \, n^{2} + 303 \, n + 63\right)}}{{\left(8 \, n^{2} - 2 \, n + 3\right)} {\left(4 \, n^{2} + 3 \, n + 2\right)} {\left(2 \, n + 3\right)} {\left(n + 1\right)}}
.
\end{align*}
}

Thus $\{R_n\}_{n\geq0}$ is a  sequence of matrix orthonormal polynomials  with respect to the positive semi-definite matrix-valued inner product given by
$$\langle F,G\rangle=\int_0^\infty F(y)\left(\begin{matrix}	1&\sqrt y\\ \sqrt y & y
\end{matrix}\right)G^*(y)e^{-y}dy+F(0)\left(\begin{matrix}	0&0\\ 0 & 1
\end{matrix}\right)G^*(0),$$
for $2\times2$ matrix-valued functions $F,G$.

We now explore the Darboux process for  the above matrix TTRR, but applied to the monic matrix orthogonal polynomials. Since the leading coefficient of $R_n$ is given by
$$
\left(\begin{smallmatrix}
\frac{\sqrt{4 \, n^{2} - 5 \, n + 3}  \sqrt{2 \, n + 1}}{4 \, \sqrt{8 \, n^{2} - 2 \, n + 3} {\left(2 \, n - 1\right)} \sqrt{n + 1} {\left(n - 1\right)} n \left(2 \, n - 3\right)!} & 0 \\
\frac{{\left(8 \, n^{3} + 6 \, n^{2} - 5 \, n + 3\right)}  {\left(2 \, n + 1\right)}}{4 \, \sqrt{8 \, n^{2} - 2 \, n + 3} \sqrt{4 \, n^{2} + 3 \, n + 2} \sqrt{2 \, n + 3} {\left(2 \, n - 1\right)} {\left(2 \, n - 3\right)} \sqrt{n + 1} {\left(n - 1\right)} n \left(2 \, n - 4\right)!} &
\frac{ -\sqrt{8 \, n^{2} - 2 \, n + 3} \sqrt{n + 1}}
{\sqrt{4 \, n^{2} + 3 \, n + 2} \sqrt{2 \, n + 3} \left(2 n\right)!}
\end{smallmatrix}\right), $$
we can build explicitly the sequence of monic matrix orthogonal polynomials $\{P_n\}_{n\geq0}$. They  will satisfy a matrix TTRR such that the corresponding  Jacobi matrix of $(2\times2)$-blocks can be decomposed in the form $LU$  where $L$ is a lower block triangular matrix and $U$ is a block upper triangular  matrix. From Theorem \ref{a} the Darboux transformation will give rise to a sequence of monic matrix polynomials $\{Q_n\}_{n\geq0}$ orthogonal with respect to the  weight $e^{-y}$ multiplied by $y=x^2$. More precisely, the sequences $\{P_n\}_{n\geq0}$ and $\{Q_n\}_{n\geq0}$ satisfy
\begin{equation}\label{eq}
\begin{aligned}
	xP_n(x)=&P_{n+1}(x)+(\zeta_{2n+1}+\zeta_{2n})P_n(x)+\zeta_{2n}\zeta_{2n-1}P_{n-1}(x), n\geq 0,
\\
	xQ_n (x)=&Q_{n+1}(x)+(\zeta_{2n+2 }+\zeta_{2n+1})Q_n(x)+\zeta_{2n+1}\zeta_{2n }Q_{n-1}(x), n\geq 0,
\end{aligned}
\end{equation}
where
\begin{align*}
\zeta_{2n}=&\left(\begin{array}{rr}
-\frac{2 \, {\left(16 \, n^{2} - 12 \, n - 9\right)} {\left(2 \, n - 1\right)}^{2} {\left(n - 1\right)} n}{{\left(4 \, n^{2} - 5 \, n + 3\right)} {\left(2 \, n + 1\right)}} & \frac{4 \, {\left(8 \, n^{3} - 12 \, n^{2} + 4 \, n + 3\right)} n}{{\left(4 \, n^{2} - 5 \, n + 3\right)} {\left(2 \, n + 1\right)}} \\
-\frac{2 \, {\left(16 \, n^{3} - 40 \, n^{2} + 28 \, n - 3\right)} {\left(2 \, n + 1\right)} {\left(2 \, n - 1\right)}^{2} n}{4 \, n^{2} - 5 \, n + 3} & \frac{2 \, {\left(16 \, n^{3} - 36 \, n^{2} + 29 \, n - 6\right)} {\left(2 \, n + 1\right)} n}{4 \, n^{2} - 5 \, n + 3}
\end{array}\right),
\\
\zeta_{2n-1}=&\left(\begin{array}{rr}
-\frac{2 \, {\left(32 \, n^{4} + 8 \, n^{3} - 14 \, n^{2} + 7 \, n + 3\right)} {\left(2 \, n - 1\right)} n}{{\left(4 \, n^{2} - 5 \, n + 3\right)} {\left(2 \, n + 1\right)}} & \frac{4 \, {\left(8 \, n^{3} - 2 \, n + 3\right)} n}{{\left(4 \, n^{2} - 5 \, n + 3\right)} {\left(2 \, n + 1\right)}} \\
-\frac{2 \, {\left(32 \, n^{4} + 16 \, n^{3} - 32 \, n^{2} + 14 \, n + 9\right)} {\left(2 \, n + 1\right)} {\left(2 \, n - 1\right)} n}{4 \, n^{2} - 5 \, n + 3} & \frac{2 \, {\left(16 \, n^{3} + 4 \, n^{2} - 15 \, n + 12\right)} {\left(2 \, n + 1\right)} n}{4 \, n^{2} - 5 \, n + 3}
\end{array}\right).
\end{align*}
These TTRR are related through a Darboux transformation. Namely, we have
\begin{align*}
	x
\left(\begin{matrix}
P_0\\
P_1\\
P_2\\
P_4\\
\vdots
\end{matrix}\right)
=&
\left(\begin{matrix}
1&0&0&0&\cdots\\
\zeta_2&1&0&0&\cdots\\
0&\zeta_4&1&0&\cdots\\
0&0&\zeta_6&1&\ddots\\
\vdots & \vdots&\ddots&\ddots&\ddots
\end{matrix}\right)
\times
\left(\begin{matrix}
\zeta_1&1&0&0&\cdots\\
0&\zeta_3&1&0&\cdots\\
0&0&\zeta_5&1&\ddots\\
0&0&0&\zeta_7&\ddots\\
\vdots & \vdots&\ddots&\ddots&\ddots
\end{matrix}\right)
\left(\begin{matrix}
P_0\\
P_1\\
P_2\\
P_4\\
\vdots
\end{matrix}\right),
\\
		x
\left(\begin{matrix}
Q_0\\
Q_1\\
Q_2\\
Q_4\\
\vdots
\end{matrix}\right)
=&
\left(\begin{matrix}
\zeta_1&1&0&0&\cdots\\
0&\zeta_3&1&0&\cdots\\
0&0&\zeta_5&1&\ddots\\
0&0&0&\zeta_7&\ddots\\
\vdots & \vdots&\ddots&\ddots&\ddots
\end{matrix}\right)
\times
\left(\begin{matrix}
1&0&0&0&\cdots\\
\zeta_2&1&0&0&\cdots\\
0&\zeta_4&1&0&\cdots\\
0&0&\zeta_6&1&\ddots\\
\vdots & \vdots&\ddots&\ddots&\ddots
\end{matrix}\right)
\left(\begin{matrix}
Q_0\\
Q_1\\
Q_2\\
Q_4\\
\vdots
\end{matrix}\right).
\end{align*}

Equation \eqref{eq} may be compared with \cite[Lemma 3.3]{DS02}.

It is worth to notice that the coefficients for the TTRR of $\{Q_n\}_{n\geq0}$ are nicer. Indeed,
$$
\zeta_{2n+2 }+\zeta_{2n+1}=
4(n+1)
\left(\begin{array}{rr}
-{\left(4 \, n + 3\right)} {\left(2 \, n + 1\right)} & 2 \\
-2 \, {\left(4 \, n^{2} + 8 \, n + 5\right)} {\left(2 \, n + 3\right)} {\left(2 \, n + 1\right)} & {\left(4 \, n + 5\right)} {\left(2 \, n + 3\right)}
\end{array}\right),
$$
$$
\zeta_{2n+1}\zeta_{2n }=
4(n+1)n(2n+1)
\left(\begin{array}{rr}
-{\left(8 \, n + 3\right)} {\left(2 \, n - 1\right)} & 4 \\
-4 \, {\left(2 \, n + 3\right)} {\left(2 \, n + 1\right)}^{2} {\left(2 \, n - 1\right)} & {\left(8 \, n + 5\right)} {\left(2 \, n + 3\right)}
\end{array}\right).
$$
This is in concordance with the results of Section \ref{Sec-DT}. Indeed,  the sequence of monic matrix polynomials $\{Q_n\}_{n\geq0}$ is built from a sequence of polynomials satisfying a five term recurrence relation that, as we  proved above, is the square iterated of a {\it standard} TTRR and, as a consequence, we get a very simple expression for their coefficients.

\section{Matrix-valued Bispectrality}\label{tres}

Once one realizes that the sequence of matrix-valued polynomials $\{R_n\}_{n\geq0}$ given by \eqref{R_n} is related via a Darboux transformation with classical standard orthogonal polynomials it is natural to seek for matrix linear differential equations. As in the previous section we will set $c=0$, $\alpha=0$, $N=1$ and the inner product \eqref{inner}, for an initial examplification; at the end we will deal with arbitrary size $N+1$ and general coefficients.

After  straightforward computations one can see that for the following $2\times2$ matrix-valued operator,
$$
\mathfrak{D}=  \sum_{k=0}^8\frac{{d}^k}{{d}x^k} D_k(x),
$$
acting on the right-hand side of the polynomial $R_{n}(x)$, yields
$$R_n (x) \mathfrak{D}=\Lambda_nR_n (x),$$
where
\begin{align*}
D_0=&\left(\begin{array}{rr}
0 & 0 \\
-3 & 3
\end{array}\right),\quad
D_1= \left(\begin{array}{rr}
9 \, y - 6 & -12 \\
-105 \, y + 54 & 24 \, y
\end{array}\right),\\
D_2=&\left(\begin{array}{rr}
27 \, y^{2} + 474 \, y - 72 & -276 \, y \\
-6 \, {\left(151 \, y + 459\right)} y & 3 \, {\left(19 \, y + 1100\right)} y
\end{array}\right),
\\
D_3=&\left(\begin{array}{rr}
24 \, {\left(y^{2} + 166 \, y + 93\right)} y & -12 \, {\left(53 \, y + 570\right)} y \\
-4 \, {\left(287 \, y + 6852\right)} y^{2} & 8 \, {\left(4 \, y^{2} + 1278 \, y + 2205\right)} y
\end{array}\right),
\\
D_4=& \left(\begin{array}{rr}
4 \, {\left(y^{2} + 1080 \, y + 4701\right)} y^{2} & -8 \, {\left(37 \, y + 2253\right)} y^{2} \\
-8 \, {\left(47 \, y + 4908\right)} y^{3} & 4 \, {\left(y^{2} + 1770 \, y + 14301\right)} y^{2}
\end{array}\right),
\\
D_5=&\left(\begin{array}{rr}
96 \, {\left(13 \, y + 252\right)} y^{3} & -32 \, {\left(y + 348\right)} y^{3} \\
-32 \, {\left(y + 534\right)} y^{4} & 384 \, {\left(4 \, y + 123\right)} y^{3}
\end{array}\right),\\
D_6=& \left(\begin{array}{rr}
96 \, {\left(y + 101\right)} y^{4} & -2208 \, y^{4} \\
-2656 \, y^{5} & 32 \, {\left(3 \, y + 443\right)} y^{4}
\end{array}\right)
,\\
D_7=&\left(\begin{array}{rr}
1408 \, y^{5} & -128 \, y^{5} \\
-128 \, y^{6} & 1664 \, y^{5}
\end{array}\right),\quad
D_8=\left(\begin{array}{rr}
64 \, y^{6} & 0 \\
0 & 64 \, y^{6}
\end{array}\right)
\end{align*}
and
$$
\Lambda_n=
\left(\begin{array}{rr}
 {\left(4 \, n^{3} - n + 6\right)} n & 0 \\
0 &   {\left(2 \, n^{3} + 3 \, n^{2} + n + 3\right)} {\left(2 \, n + 1\right)}
\end{array}\right).
$$
Furthermore, it is easy to check that there is not a linear differential operator of order less than $8$ having $\{R_n\}_{n\geq0}$ as eigenfunctions.
On the other hand, the results in \cite{KKB98} prove the existence of a linear differential operator of order $2\alpha+8$ (see \cite[Theorem 3.1]{KKB98}).
This, of course, is not a coincidence as we will  show below.

\subsection{Bispectrality for general size}

Following the construction in \cite[page 265]{DvA95}, let $N\in \mathbb N$ and $\{s_n\}_{n\geq0}$ be  a sequence of orthonormal polynomials,  satisfying the $(2N+3)$-term recurrence relation
$$
x^{N+1}s_n(x)=\sum_{k=0}^{N+1}\left( c_{n,k}s_{n-k}(x)+\overline{c_{n+k,k}}s_{n+k}(x)  \right), n\geq0,
$$
where $c_{n,k}$ are complex numbers, $c_{n,N}=0$ for any $n$, the degree of $s_n$ is $n$, and $s_n=0$ for $n<0$.

For any $n,$ let $R_{k,n}$, $k=0,\dots,n$, be polynomials such that
$$
s_n(x)=R_{0,n}(x^{N+1})+x R_{1,n}(x^{N+1})+\dots+x^N R_{N,n}(x^{N+1}).
$$

Let

\begin{equation}\label{R_ngen}
R_n(y)=
\left(\begin{matrix}
R_{0,(N+1)n}(y)   &    R_{1,(N+1)n}(y) & \cdots & R_{N,(N+1)n}(y)
\\
R_{0,(N+1)n+1}(y)   &    R_{1,(N+1)n+1}(y) & \cdots & R_{N,(N+1)n+1}(y)
\\
\vdots                        &                    \vdots    &             & \vdots
\\
R_{0,(N+1)n+N}(y)   &    R_{1,(N+1)n+N}(y) & \cdots & R_{N,(N+1)n+N}(y)
\end{matrix}\right).
\end{equation}
In \cite{DvA95} it is proved that $\{R_n\}_{n\geq0}$ is a sequence of matrix polynomials that satisfies a TTRR. We will prove that if there exists a differential operator $ {D}$ having every $s_n(x)$ as eigenfunction, then there is a matrix-valued differential operator that has every $R_n(y)$ as eigenfunction.

Before stating the theorem let us introduce some notation. We denote by  $w$ the $(N+1)$-th root of unity $e^{i\tfrac{2\pi}{N+1}}$. For $y\neq 0$ we denote by ${|y|}^{\tfrac {j-1}{N+1}}$ its only positive $(N+1)$-th root. Then all the $(N+1)$-th roots  of $y$ are  $x=w^j{|y|}^{\tfrac {j-1}{N+1}}$ for $j=0,1,\dots, N$. Given a  differential operator ${D}$ with coefficients in the variable $x$ we will denote it by $ {D}(x)$ to emphasize the role of variable and by $ {D}\left(w^j{|y|}^{\tfrac {j-1}{N+1}}\right)$ the operator obtained after the change of variables $x\to w^j{|y|}^{\tfrac {j-1}{N+1}}$.

 \begin{theorem}
Let us assume that there exists a (scalar-valued) linear differential operator $ {D}(x)$ with polynomial coefficients  such that
$$ {D}(x) \, s_n(x) =\lambda_n\, s_n(x),\quad n=0,1,\dots.
$$
We consider the matrix-valued differential operator $\mathfrak D(y)$ acting on the right-hand side, given by  $$
\mathfrak D(y)=A(y)\,B\, C(y)\,B^{-1}\,A(y)^{-1},
$$
where 
$A(y)$ is a diagonal matrix, $B$ is a constant matrix and $C(y)$ is a (diagonal) matrix-valued operator acting on the right-hand side, all of size $(N+1)\times(N+1),$ such that
$$
A(y)_{j,j}={|y|}^{\tfrac {j-1}{N+1}},\quad
B_{j,k}=w^{(j-1)(k-1)},\quad
C(y)_{j,j} =  D\left(w^{j-1}{|y|}^{\tfrac {1}{N+1}}\right).
$$
Then
$$R_n(y)\,\mathfrak D(y)=\Lambda_nR_n(y),$$
where $R_n$ are the $(N+1)\times(N+1)$ matrix-valued polinomials given in \eqref{R_ngen} and $\Lambda_n$ is the diagonal eigenvalue matrix
$$\Lambda_n
=
\left(\begin{smallmatrix}
\lambda_{(N+1)n}   &     &  &
\\
  &   \lambda_{(N+1)(n+1)} &  &
\\
                        &                        &     \ddots        &
\\   &     &  & \lambda_{(N+1)N}
\end{smallmatrix}\right).
$$
\end{theorem}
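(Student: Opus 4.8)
The plan is to exploit the fact that, up to a discrete Fourier (Vandermonde) change of basis, the entries of $R_n(y)$ are simply the values of the scalar polynomials $s_m$ at the $N+1$ distinct $(N+1)$-th roots of $y$. I would work on the ray $y>0$, where $|y|^{1/(N+1)}$ is a genuine variable and $\big(w^{j}|y|^{1/(N+1)}\big)^{N+1}=y$, and first record the purely algebraic identity
\[
R_n(y)\,A(y)\,B=S_n(y),\qquad S_n(y)_{\ell,m}:=s_{(N+1)n+\ell-1}\!\left(w^{m-1}|y|^{1/(N+1)}\right).
\]
This follows by inserting the definitions of $A(y)$ and $B$ into \eqref{R_ngen} and collapsing the resulting sum over the columns with the help of $w^{N+1}=1$ and the defining relation $s_m(x)=\sum_{k=0}^{N}x^{k}R_{k,m}(x^{N+1})$. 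Since the $(N+1)$-th roots of unity are distinct, $B$ is invertible, and $A(y)$ is invertible for $y\neq0$; hence $\mathfrak D(y)=A(y)\,B\,C(y)\,B^{-1}A(y)^{-1}$ is well defined.

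The second step is the eigenvalue bookkeeping. The $m$-th diagonal entry of $C(y)$ is, by definition, the operator obtained from $D(x)$ by the change of variables $x=w^{m-1}|y|^{1/(N+1)}$; by the chain rule its action on the right on the function $y\mapsto s_j\!\big(w^{m-1}|y|^{1/(N+1)}\big)$ equals $\big(D(x)s_j(x)\big)\big|_{x=w^{m-1}|y|^{1/(N+1)}}=\lambda_j\, s_j\!\big(w^{m-1}|y|^{1/(N+1)}\big)$. Applying this entrywise to $S_n(y)$ gives $S_n(y)\,C(y)=\Lambda_n\,S_n(y)$, with $\Lambda_n$ the diagonal matrix $\operatorname{diag}\big(\lambda_{(N+1)n},\lambda_{(N+1)n+1},\dots,\lambda_{(N+1)n+N}\big)$ acting on the left. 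Combining with the first step,
\[
R_n(y)\,\mathfrak D(y)=\big(R_n(y)A(y)B\big)\,C(y)\,B^{-1}A(y)^{-1}=\Lambda_n\,S_n(y)\,B^{-1}A(y)^{-1}=\Lambda_n\,R_n(y),
\]
which is the asserted identity on $y>0$.

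It remains to verify that $\mathfrak D(y)$ is an honest matrix differential operator with rational (indeed, as in the $N=1$ example above, polynomial) coefficients in $y$, so that the displayed identity is an identity of matrix polynomials valid for all $y$. Here the Fourier structure of $B$ is decisive: conjugation by $B$ (summation over the roots of unity) picks out, in each entry of $B\,C(y)\,B^{-1}$, only those terms of $D$ whose power of $|y|$ is congruent $\pmod{\mathbb Z}$ to a prescribed fraction determined by the difference of the indices, and the outer conjugation by the diagonal $A(y)$ then shifts that exponent by exactly the complementary amount, so that every surviving power of $|y|$ becomes an integer; a short computation with $x\,(d/dx)=(N+1)\,y\,(d/dy)$ makes the congruence bookkeeping explicit. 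I expect this cancellation of fractional powers — ensuring that $\mathfrak D(y)$ is bona fide — to be the only genuinely delicate point; the eigenvalue identity itself is a formal consequence of the Vandermonde change of variables, precisely as illustrated by the explicit order-$8$ operator for $N=1$, $\alpha=0$ exhibited above. Finally, the same argument, applied to the differential operator of order $2\alpha+8$ provided by \cite{KKB98} for the Laguerre–Sobolev case, explains the appearance of that particular order and recovers $\mathfrak D$ in the illustrative example.
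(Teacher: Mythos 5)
Your argument coincides with the paper's own proof: both hinge on the identity $\bigl(R_n(y)A(y)B\bigr)_{j,k}=s_{(N+1)n+j-1}\bigl(w^{k-1}|y|^{1/(N+1)}\bigr)$, then apply the diagonal operator $C(y)$ columnwise to pick up the eigenvalues $\lambda_{(N+1)n+j-1}$, and finally conjugate back by $B^{-1}A(y)^{-1}$. Your closing discussion of the cancellation of fractional powers of $y$ (so that $\mathfrak D(y)$ has rational or polynomial coefficients) goes beyond what the paper verifies, but it is not required for the eigenvalue identity actually claimed in the theorem.
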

\begin{proof}
By looking at the entry $(R_n(y)\,A(y)B)_{j,k}$  we have
$$
\left(\begin{matrix}
R_{0,(N+1)n+j-1}(y) & R_{1,(N+1)n+j-1} (y)& \dots &R_{N,(N+1)n+j-1} (y)
\end{matrix}\right)
\times
\left(\begin{matrix}
{|y|}^{\tfrac {0}{N+1}}      w^{0(k-1)} \\
{|y|}^{\tfrac {1}{N+1}}    w^{1(k-1)}\\
  \dots \\
{|y|}^{\tfrac {N}{N+1}}    w^{N(k-1)}
\end{matrix}\right)
$$
which is
\begin{multline*}
R_{0,(N+1)n+j-1}(y) {|y|}^{\tfrac {0}{N+1}}      w^{0(k-1)} +   \dots + R_{N,(N+1)n+j-1} (y){|y|}^{\tfrac {N}{N+1}}    w^{N(k-1)}
\\
=
s_{(N+1)n+j-1}\left({|y|}^{\tfrac {1}{N+1}}    w^{(k-1)}\right).
\end{multline*}

Since  $(\left(R_n(y)\,A(y)B\right)\, C(y))_{j,k}= D(w^{k-1}{|y|}^{\tfrac {1}{N+1}})\left((R_n(y)\,A(y)B)_{j,k}\right),$  we have
\begin{align*}
(R_n(y)\,A(y)B\, C(y))_{j,k}= &D(w^{k-1}{|y|}^{\tfrac {1}{N+1}})\left(s_{N,(N+1)n+j-1}\left({|y|}^{\tfrac {1}{N+1}}    w^{(k-1)}\right)\right)
\\
=&
\lambda_{(N+1)n+j-1} s_{(N+1)n+j-1}\left({|y|}^{\tfrac {1}{N+1}}    w^{(k-1)}\right)
\\
=&\lambda_{(N+1)n+j-1} (R_n(y)\,A(y)B)_{j,k}.
\end{align*}
This implies that $R_n(y)\,A(y)B C(y)=\Lambda_nR_n(y)\,A(y)B.$

Thus
$$
R_n(y)\,A(y)B\, C(y)\,B^{-1}\,A(y)^{-1}=\Lambda_nR_n(y),
$$
and we get the desired statement.
\end{proof}

Notice that bispectrality for Krall-Laguerre orthogonal polynomials, an example of standard orthogonal polynomials,  when $\alpha$ is a positive integer, has been studied in \cite{DI20} by using a different approach.

\section*{Acknowledgements}

The research of F.  Marcell\'an has been supported by FEDER/Ministerio de Ciencia e Innovación-Agencia Estatal de Investigación of Spain, grant PID2021-122154NB-I00, and the Madrid
Government (Comunidad de Madrid-Spain) under the Multiannual Agreement with UC3M in the line of Excellence of University Professors, grant EPUC3M23 in the context of the V PRICIT  (Regional Program of Research and Technological Innovation).
The research of the author Ignacio Zurri\'an was partially supported by CONICET and by VI PPIT-US.

\bibliographystyle{alpha}


\bigskip 
\end{document}